\def\dis{\displaystyle}
\def\ep{\varepsilon}
\def\R{\mathbb{R}}
\def\T{\mathbb{T}}
\def\A{{\mathcal A}}
\def\te{\theta}
\newenvironment{proofof}[1]{\bigskip\noindent{\it Proof of~#1.}\rm}{\hfill $\Box$}
\newtheorem{theorem}{Theorem}[section]
\newtheorem{proposition}[theorem]{Proposition}
\newtheorem{lemma}[theorem]{Lemma}
\newtheorem{remark}{Remark}
\title{On the maximization of a class of functionals on convex regions,
and the characterization of the farthest convex set}
\author{Evans M. Harrell II\\
School of Mathematics\\
Georgia Institute of Technology\\
Atlanta GA, 30332-0160, USA\\
email: harrell@math.gatech.edu
\and
Antoine Henrot\\ Institut \'Elie Cartan Nancy\\
UMR 7502,
Nancy Universit\'e - CNRS - INRIA\\
B.P. 70239 54506 Vandoeuvre les Nancy Cedex,  France\\
email: henrot@iecn.u-nancy.fr}
\begin{document}

\maketitle

\begin{abstract}
We consider a family of functionals $J$ to be maximized over the planar convex sets $K$
for which the perimeter and Steiner point have been fixed.  Assuming that $J$ is the integral of a quadratic expression in the support function $h$, we show that the maximizer is always either a triangle or a line segment (which can be considered as a
collapsed triangle).  Among the concrete consequences of the main theorem is the
fact that, given any convex body $K_1$ of finite perimeter, the set in the class we consider that is farthest away in the sense of the $L^2$ distance
is always a line segment. We also prove the same property for the Hausdorff distance.
\end{abstract}

\vspace{0.6cm}
\textbf{Keywords}:
isoperimetric problem, shape optimization, convex geometry,
polygons, farthest convex set\\

\vspace{0.3cm}
\textbf{AMS classification}:
52A10, 52A40, 52B60, 49Q10\\


\section{Introduction}
Given a convex set $K_1$ in the plane, consider the problem of finding a second convex set that is as far as possible from $K_1$ in the sense of usual distances like the Hausdorff distance or the $L^2$ distance, subject to two natural geometric constraints, {\it viz.}, that the two sets have the same perimeter and Steiner point, without either of which conditions there are sets arbitrarily far away
from $K_1$.  A plausible conjecture, which we prove below, is that the farthest convex set, subject to the two constraints, is always a ``needle,'' to use the colorful terminology of
P\'olya and Szeg\H{o} \cite{PS} for a line segment in the plane.

In the case of the $L^2$ distance, the problem of the farthest convex set can be expressed as the maximization of a quadratic integral functional of the support function of the desired set, and, as we shall show, with the same two geometric constraints it is
possible to characterize the maximizers of a wider class of such functionals as either triangles or needles, which, intuitively, can be considered as collapsed triangles.  One of our inspirations for pursuing the wider class of functionals, the maximizers of which are triangles, is a recent article \cite{LaNo},
in which the maximizers of another class of convex functionals were shown to be polygons.  Now, the maximizers of a convex functional must lie on the boundary of the feasible set, which is to say, in our case or that of \cite{LaNo}, that the maximizers will be nonstrictly convex, but not simple polygons {\it a priori}.  What restrictions are needed on the functional to imply furthermore that the maximizer must be triangular?  In this article, we consider functionals that are expressible as integrals of quadratic expressions in the support function, and show that the maximizers are always generalized triangles, i.e., triangles or needles.

An advantage of describing shape-optimization problems through the support function
$h$ is that it is easy to express many geometric features, including perimeter and area, in terms of $h$.  Yet another tool that is available to in the case of functionals that are quadratic in $h$ is that of Fourier series \cite{Gro}, because through the Parseval relation it is possible to rewrite many such functionals as series with geometric properties
accessible through the form of the coefficients.  Indeed another one of our inspirations was
the analysis of the maximizers of the $L^2$ means of chord lengths of curves
through Fourier series found in \cite{EHL,EFH}.  When the means with respect to
arc length are replaced with means weighted by curvature, the problem falls
within the category of quadratic functionals of $h$ considered in this article.  Interestingly, the cases of optimality of the weighted and unweighted problems are
completely different.
Because additional analysis is possible for quadratic functionals when the coefficients in
the equivalent series enjoy certain properties, we shall defer details on the chord problem
to a future article
\cite{HH2}.

This paper is organized as follows: We begin Section \ref{sec2} with the main notation and general optimality conditions. We state our main result in Subsection \ref{sec2.3}.
Next, Section \ref{sec3} is devoted to the problem of finding the farthest convex set.
We begin with an inequality involving the minimum and the maximum of the support function,
in the spirit of \cite{McM}. Then, we consider the case of the Hausdorff distance and we
finish with the case of the $L^2$ distance, for which our main result is essential.

\section{Notation and preliminary results}\label{sec2}
\subsection{Notation}
When convenient $\R^2$ will be identified with
the complex plane, and the dot product of two vectors ${\bf x}$ and ${\bf w}$
with $\Re({x \, {\overline w}})$.
Let $\T$ be the unit circle, identified with $[0,2\pi)$.
For $\te\in \T$, we will denote by $h_K(\te)$ (or more simply
$h(\te)$ if not ambiguous)
the support function of the convex set $K$; we recall that by definition
$h(\te)$ is the distance from the
origin to the support line of $K$ having outward unit normal $e^{i\te}$:
$$h_K(\te):=\max \{x \cdot e^{i\te}: x\in K\}\,.$$
It is known that the boundary of a planar convex set has at most a
countable number of points of nondifferentiability.  More precisely,
the two directional derivatives of the function defining any portion
of the boundary exist at every point and their difference is
uniformly bounded.  We refer to \cite{Roc,Web} for this and other
standard facts about convex regions.  It follows from the regularity
of the boundary that the support function $h$ belongs to the
periodic Sobolev space $H^1(\T)$.

For a polygon $K$ with $n$ sides, we let  $a_1,a_2,\ldots,a_n$
and $\te_1,\te_2,\ldots,\te_n$
denote the lengths of the sides and the angles
of the corresponding outer normals. The following characterization of
the support function of such a polygon is classical and will be useful
here:
\begin{proposition}
With the above notation, the support function of the polygon $K$
satisfies
\begin{equation}\label{1.1}
\frac{d^2 h_K}{d\te^2}+h_K=\sum_{j=1}^n a_j \delta_{\te_j}
\end{equation}
where the derivative is to be understood in the sense of distributions
and $\delta_{\te_j}$ stands for a Dirac measure at point $\te_j$.
\end{proposition}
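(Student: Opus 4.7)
My plan is to exploit the piecewise structure of $h_K$ arising from the fact that every support line of a polygon either contains exactly one vertex (generic normals) or contains a full edge (the finitely many normals $\theta_j$). On the open arc $(\theta_j,\theta_{j+1})$ between two consecutive edge-normals, the support line with outward normal $e^{i\te}$ must pass through the single vertex $V_{j}$ that is common to edges $j$ and $j+1$. Hence $h_K(\te)=V_j\cdot e^{i\te}$ on $(\theta_j,\theta_{j+1})$, which is of the form $A\cos\te+B\sin\te$ and therefore satisfies the classical identity $h_K''+h_K=0$ pointwise on each such interval. This shows that the distribution $h_K''+h_K$ is supported on the finite set $\{\theta_1,\dots,\theta_n\}$.

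Next I compute the mass that $h_K''+h_K$ assigns to each $\te_j$. Since $h_K$ is continuous (it is Lipschitz on $\T$), the only contribution at $\te_j$ comes from the jump of $h_K'$, which when differentiated in the distributional sense produces a Dirac mass with weight equal to the jump. Using the expressions $h_K(\te)=V_{j-1}\cdot e^{i\te}$ for $\te<\te_j$ and $h_K(\te)=V_j\cdot e^{i\te}$ for $\te>\te_j$, the jump equals
$$\bigl[h_K'\bigr]_{\te_j}=(V_j-V_{j-1})\cdot ie^{i\te_j}.$$
The vector $V_j-V_{j-1}$ runs along edge $j$ while $ie^{i\te_j}$ is a unit tangent to that edge; up to the orientation convention (outer normal, positive orientation of the boundary), the inner product therefore equals the signed edge length $a_j$. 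Putting the pieces together yields
$$h_K''+h_K=\sum_{j=1}^{n}a_j\,\delta_{\te_j}$$
in $\mathcal{D}'(\T)$.

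The main obstacle I anticipate is essentially bookkeeping: checking the sign conventions so that $(V_j-V_{j-1})\cdot ie^{i\te_j}=+a_j$ rather than $-a_j$, and verifying that the treatment of the endpoints $\te=0,\,2\pi$ respects the periodic identification of $\T$ (so that a polygon whose edge normal is at $\te=0$ is handled consistently). Everything else is a short distributional computation, and no new analytic input is required beyond the standard piecewise description of a polygon by its support function and the elementary fact that a single vertex $V$ contributes $V\cdot e^{i\te}$ to $h_K$ wherever $V$ attains the support.
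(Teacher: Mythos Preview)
Your argument is correct and is precisely the ``direct calculation'' that the paper alludes to without spelling out; the paper does not give a detailed proof but merely states that \eqref{1.1} follows by direct computation and notes that it is a special case of Weingarten's formula $h_K''+h_K=R$. Your decomposition into arcs $(\theta_j,\theta_{j+1})$ on which $h_K(\theta)=V_j\cdot e^{i\theta}$, followed by the jump computation $[h_K']_{\theta_j}=(V_j-V_{j-1})\cdot ie^{i\theta_j}=a_j$, is exactly the standard route, and your anticipated bookkeeping concerns (sign of the jump, periodic identification) are the only points that need care---both are easily settled by the counterclockwise orientation convention you describe.
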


Eq. \eqref{1.1} can be proved by a direct calculation. It is a special case of a
formula of Weingarten, whereby for
any support function $h_K$ of a convex set $K$, $\frac{d^2 h_K}{d\te^2}+h_K
=h_K''+h_K$ is a nonnegative measure, which is interpreted as the
(generalized) radius of curvature $R$
at the point of contact with the support line corresponding to $\te$.
We will denote by $S_h$ (or $S_K$ if we want to emphasize the dependence
on the convex set $K$) the support of this measure. It will be useful to
recover the support function from the radius of curvature.
This can be accomplished by
solving the ordinary differential equation:
\begin{equation}\label{Wein}
h''+h=R
\end{equation}
for a $2 \pi$-periodic function $h(\te)$ subject to the
conditions
\begin{equation}\label{orthcond}
\int_0^{2\pi} h(\te)\cos\te\,d\te=\int_0^{2\pi} h(\te)\sin\te\,d\te=0\,.
\end{equation}
These orthogonality conditions are imposed because
\eqref{Wein}
is in the second Fredholm alternative and hence
needs such conditions for uniqueness.  They can always be
arranged by a choice of the
origin, {\it viz.}, that it is fixed at the Steiner point $s(K)$.  Recall that the
Steiner point
$s(K)$ of a convex planar set $K$ is defined by
\begin{equation}\label{Spdef}
s(K)=\frac{1}{\pi}\int_0^{2\pi} h_K(\te) e^{i\te}\,d\te \,.
\end{equation}
By Fredholm's condition for existence the function or measure $R(\te)$
on the right side of \eqref{Wein} must satisfy the same orthogonality, that is,
$$\int_0^{2\pi} R\cos\te\,d\te=\int_0^{2\pi} R\sin\te\,d\te=0.$$
Since these restrictions on the radius of curvature
are necessary conditions in any case for the closure of the
boundary curve of $K$, they are automatically fulfilled.

An explicit Green function can be found to solve \eqref{Wein}
for $h$ in terms of $R$, i.e.,
$G(t) := \frac{1}{2}\,\left(1-\frac{|t|}{\pi}\right)\sin|t|$, in terms of which

\begin{equation}\label{1.3}
h(\te)=\frac{1}{2}
\int_{-\pi}^{\pi}  G(t) R(\te + t)\,dt\,.
\end{equation}

\smallskip\noindent
The perimeter $P(K)$ of the convex set can be easily calculated
from $h_K$:
\begin{equation}\label{1.4}
P(K)=\int_0^{2\pi} h_K(\te)\,d\te \,.
\end{equation}

In this article, we work within the class of convex sets whose Steiner point
is at the origin and whose perimeter $P(K)$ is fixed, at a value that
can be chosen as
$2\pi$ without loss of generality:
\begin{equation}\label{1.6}
 \mathcal{A}:=\{K\;\mbox{convex set in }\R^2, s(K)=O, P(K)=2\pi\}.
\end{equation}
Given that convexity is equivalent to the nonnegativity of the radius of curvature
$R = h''+h$ (in the sense of measures), the geometric set $\mathcal{A}$
can be described in analytic terms
by requiring $h$
to lie in the function space:
\begin{equation}\label{Hdef}
\begin{array}{l}
  \mathcal{H}:=\{h\in H^1(\T),h\geq 0,\;h''+h\geq 0,\;\\
\quad\quad
\int_0^{2\pi}h d\te=2\pi,\;
\int_0^{2\pi}h\cos\te d\te=\int_0^{2\pi}h\sin\te d\te=0\}.
\end{array}
\end{equation}

The class $\A$ contains in particular
``needles,'' i.e., line segments, which we regard as degenerate
convex bodies in the sense that the perimeter of the segment is taken as
twice its length. We shall let $\Sigma_{\alpha}$ designate the segment
$[-i\frac{\pi}{2} e^{i\alpha}, i\frac{\pi}{2} e^{i\alpha}]$. Its support
function is given by
\begin{equation}\label{1.7}
h_{\alpha}(\te):=\frac{\pi}{2}|\sin(\te-\alpha)|\,,
\end{equation}
which satisfies ${h_{\alpha}}''+h_{\alpha} = \pi(\delta_{\alpha}+
\delta_{\pi+\alpha})$.

\subsection{Optimality conditions}

If the goal is to maximize a functional $J$ defined on the geometric
class $\A$, and $J$ is expressible in terms of the support function
$h$, then we may equivalently consider the problem of determining
\begin{equation}\label{1.8}
\max \{J(h): h\in \mathcal{H}\}.
\end{equation}
We may then analytically determine the conditions for optimality of $J$.

The Steiner point $s$ of a closed convex set always lies within the set, and in the
case of a convex
body (a convex set of nonempty interior),
$s$ is an interior point; see, e.g.,
(1.7.6) in \cite{Sch1}.
It follows that the support function of $K$ can vanish only
if $K$ is a segment.  For any convex body in $\A$, $h_K(\te)>0$
for all $\te$.

We next derive the first and second order
optimality conditions assuming that the optimal set is not a segment,
following \cite{LaNo}.
\begin{theorem}\label{theooptimcond}
If $h_0>0$ is a solution of (\ref{1.8}), where $J:H^1(\T)\to\R$ is $C^2$,
then there exist $\xi_0\in H^1(\T)$, $\xi_0 \le 0$, and $\mu\in\R$ such that
\begin{equation}\label{1.9}
 \xi_0=0\;\mbox{on }S_{h_0},
\end{equation}
and $\forall v\in H^1(\T)$,
\begin{equation}\label{1.10}
\left\langle{J'(h_0),v}\right\rangle = \left\langle{\xi_0+{\xi_0}'',v}\right\rangle+
\mu \int_0^{2\pi} v\,d\te \,.
\end{equation}
Moreover, if $v\in H^1(\T)$ such that
$\exists \lambda\in\R$ satisfies
\begin{equation}\label{1.11}
\begin{array}{l}
 v''+v\geq \lambda ({h_0}''+h_0) \\
v\geq \lambda h_0 \\
\left\langle{\xi_0+{\xi_0}'',v}\right\rangle+\mu \int_0^{2\pi} v\,d\te =0.
\end{array}
\end{equation}
then
\begin{equation}\label{1.12}
\left\langle{J''(h_0),v,v}\right\rangle\leq 0\,.
\end{equation}
\end{theorem}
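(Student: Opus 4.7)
The strategy is a Karush--Kuhn--Tucker argument for the infinite-dimensional maximization \eqref{1.8}, following the framework of \cite{LaNo}. Since $h_0>0$, the pointwise inequality $h\geq 0$ in $\mathcal{H}$ is inactive at $h_0$, so the only active pieces of the feasibility set are the measure inequality $h''+h\geq 0$ and the three linear equalities in \eqref{Hdef}. The plan is to apply the cone Lagrange multiplier rule to produce $\xi_0$ and $\mu$, read off \eqref{1.9} as complementary slackness and the sign $\xi_0\leq 0$ from the KKT formalism, and then obtain \eqref{1.12} by a Taylor expansion of $J$ along a feasible curve whose initial velocity realizes the critical direction described by \eqref{1.11}.

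For the first-order conditions, I would form the Lagrangian
\[
\mathcal{L}(h) \;=\; J(h) + \int_0^{2\pi}\eta\,(h''+h)\,d\te \;-\; \mu\int_0^{2\pi} h\,d\te
\]
(with additional multipliers for the Steiner-point equalities that can be absorbed via translation invariance of the admissible class). The admissible perturbation cone at $h_0$ consists of $v\in H^1(\T)$ satisfying the three linear equalities together with $v''+v\geq 0$ off $S_{h_0}$; on $S_{h_0}$, where $h_0''+h_0>0$, the perturbation is unconstrained. Maximality forces $\langle J'(h_0),v\rangle\leq 0$ on this cone, with equality on its linear span, and a Farkas-type separation in the dual produces $\eta\geq 0$ supported off $S_{h_0}$ together with a scalar $\mu$ satisfying $\langle J'(h_0),v\rangle + \langle \eta+\eta'',v\rangle - \mu\int v\,d\te=0$. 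Setting $\xi_0:=-\eta$ yields \eqref{1.10}, the sign $\xi_0\leq 0$, and \eqref{1.9}.

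For the second-order condition, the hypotheses \eqref{1.11} say exactly that $v$ belongs to the critical cone at $h_0$: the rescaling $w:=v-\lambda h_0$ satisfies $w\geq 0$ and $w''+w\geq 0$, so $w$ is the initial velocity of an admissible curve $h(t)\in\mathcal{H}$ (after a higher-order correction enforcing the three linear constraints), and the third line in \eqref{1.11} is exactly the first-order Lagrangian stationarity on $v$. Taylor-expanding $J(h(t))$ to second order, the linear term vanishes by \eqref{1.10}, and $J(h(t))\leq J(h_0)$ gives $\langle J''(h_0),w,w\rangle\leq 0$. A short manipulation using \eqref{1.10} on the $\lambda h_0$ correction then converts this into \eqref{1.12}.

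The technically delicate step, which I expect to be the main obstacle, is the construction of the feasible curve $h(t)$ realizing the critical direction $w$. The convexity constraint is an inequality between measures, and when $h_0$ is polygonal the measure $h_0''+h_0$ is purely atomic on $S_{h_0}$; keeping the curvature measure nonnegative through $O(t^2)$ on and off $S_{h_0}$ requires careful use of the Green-function representation \eqref{1.3} together with the inequalities $v''+v\geq\lambda(h_0''+h_0)$ and $v\geq\lambda h_0$, which provide exactly the leeway needed. This is the content of the construction in \cite{LaNo}, whose argument can be transcribed essentially verbatim.
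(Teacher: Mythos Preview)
Your proposal is correct and follows exactly the approach the paper indicates: the paper does not actually prove this theorem but simply states that it is ``classical'' and refers to the abstract first- and second-order optimality theory of Maurer--Zowe \cite{MaZo} as specialized to the convexity constraint in \cite{LaNo}, which is precisely the KKT/critical-cone argument you sketch. Your outline is in fact more detailed than the paper's own treatment; the one place to be careful is the passage from $\langle J''(h_0),w,w\rangle\le 0$ to $\langle J''(h_0),v,v\rangle\le 0$, but since the conditions \eqref{1.11} let you build the feasible curve with initial velocity $v$ directly (rather than $w=v-\lambda h_0$), this wrinkle disappears and the deferral to \cite{LaNo} for the construction is appropriate.
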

The proof of the foregoing theorem is classical and can be achieved
using standard first and second order optimality conditions in infinite
dimension space as in \cite{MaZo}; we refer to \cite{LaNo} for technical
details.
\begin{remark}\rm
If the optimal domain $K_0$ is a segment, then the optimality condition
is more complicated to write, because the constraint $h\geq 0$ needs to
be taken into account.  Since it will not be needed here, we do not
write the explicit form.
\end{remark}

\subsection{Integral functionals}\label{sec2.3}
In this section, we are interested in quadratic functionals involving
the support function and its first derivative. Let $J$ be the functional
defined by:
\begin{equation}\label{intfnl}
 J(K):=\int_0^{2\pi}{a\, h_K^2+ b\, {h'_K}^2 + c\, h_K
+ d\, {h'_K}\,d\te},
\end{equation}
where $a$ and $b$ are nonnegative bounded functions of
$\te$, one of them being positive almost everywhere
on $\T$. The functions $c,d$ are assumed to be bounded.
Our main theorem is the following:
\begin{theorem}\label{theo2.1}
Every local maximizer of the functional $J$ defined in \eqref{intfnl},
within the class $\A$ is either a line segment or a triangle.
\end{theorem}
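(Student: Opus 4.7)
The plan is to combine Theorem~\ref{theooptimcond} with the convexity of the quadratic part of $J$, in the spirit of \cite{LaNo}. First observe that
\[
J''(h_0)\cdot(v,v) \;=\; 2\int_0^{2\pi}\bigl(a\,v^2 + b\,(v')^2\bigr)\,d\te
\]
is manifestly nonnegative, and that any admissible variation $v$ satisfies $\int_0^{2\pi} v\,d\te = 0$ because the perimeter is fixed. Combining this with the hypothesis that $a$ or $b$ is positive almost everywhere forces $J''(h_0)\cdot(v,v)>0$ whenever $v\not\equiv 0$ (if $b>0$ a.e., then $v'\equiv 0$ and zero mean force $v\equiv 0$). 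Consequently any nonzero $v$ meeting the requirements of~\eqref{1.11} violates~\eqref{1.12}, and the rest of the proof amounts to constructing such a $v$ whenever $K_0$ has more than three vertex normals.

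Assume $h_0$ is a local maximizer that is not a segment, so $h_0>0$ everywhere and Theorem~\ref{theooptimcond} applies. The first task is to show that $R_0:=h_0''+h_0$ is purely atomic, so that $K_0$ is a polygon. Suppose instead that the density of $R_0$ is bounded below by a positive constant on some open arc $I\subset\T$. For any $\phi\in C^\infty_c(I)$ with $\int\phi\,d\te=\int\phi\cos\te\,d\te=\int\phi\sin\te\,d\te=0$, the Green representation~\eqref{1.3} produces a unique $v\in H^1(\T)$ solving $v''+v=\phi$ and satisfying the Steiner orthogonality~\eqref{orthcond}; automatically $\int v\,d\te=0$. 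For $\|\phi\|_{L^\infty}$ small enough, both $h_0+tv$ and $h_0-tv$ remain in $\A$ for small $t>0$, so first-order optimality forces $\langle J'(h_0),v\rangle=0$. Together with~\eqref{1.10} this yields the third line of~\eqref{1.11}, while taking $\lambda$ sufficiently negative makes the first two lines trivial (using $R_0,h_0>0$ wherever relevant). The second-order condition~\eqref{1.12} then combines with the strict positivity of the first paragraph to give $v\equiv 0$, whence $\phi\equiv 0$, contradicting the freedom of choice of $\phi$.

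Writing the vertex normals of the polygonal $K_0$ as $\te_1,\dots,\te_n$, suppose for contradiction that $n\ge 4$ and consider variations $v$ determined by
\[
v''+v \;=\; \sum_{j=1}^n \nu_j\,\delta_{\te_j},\qquad\nu\in\R^n.
\]
The Fredholm compatibility of this equation for a $2\pi$-periodic $v$ reads $\sum_j\nu_j\cos\te_j=\sum_j\nu_j\sin\te_j=0$, and the perimeter constraint translates, via~\eqref{1.3} and the identity $\int_{-\pi}^\pi G(t)\,dt=1$, into $\sum_j\nu_j=0$; these three linear conditions leave an $(n-3)$-dimensional cone of admissible parameters, nontrivial for $n\ge 4$. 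Pick any nonzero $v$ so obtained. Taking $\lambda$ sufficiently negative renders the first two lines of~\eqref{1.11} trivial since each $a_j:=R_0(\{\te_j\})>0$ and $h_0>0$ is bounded below; the third line is automatic because $\int v\,d\te=0$ and $\langle\xi_0+\xi_0'',v\rangle=\sum_j\nu_j\,\xi_0(\te_j)=0$ by~\eqref{1.9}. Thus~\eqref{1.12} yields $J''(h_0)\cdot(v,v)\le 0$, contradicting the strict positivity of the quadratic form on nonzero zero-mean $v$. Hence $n\le 3$ and, $K_0$ not being a segment, $K_0$ is a triangle. The main technical obstacle is the simultaneous verification that each constructed $v$ meets all three conditions of~\eqref{1.11}; the key observation is that~\eqref{1.11} requires only the \emph{existence} of some $\lambda$, so the pointwise inequalities can always be absorbed by taking $\lambda\ll 0$, while the equality reduces to a first-order criticality that is automatic for the variations at hand (bidirectional in Step~1, and supported on $S_{h_0}$ in Step~2).
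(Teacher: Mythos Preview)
Your overall strategy---combine the strict positivity of $J''(h_0)$ on nonzero zero-mean directions with the second-order condition of Theorem~\ref{theooptimcond}---is exactly the paper's, and your Step~2 is a correct periodic variant of the paper's local construction (they solve $v_i''+v_i=\delta_{\te_i}$ with Dirichlet data on a subinterval containing four support points and extend by zero, then take a combination matching three side conditions).

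The genuine gap is Step~1. You set up the dichotomy ``either $R_0$ has density bounded below on some open arc, or $K_0$ is a polygon,'' but this dichotomy is false. A nonnegative Radon measure on $\T$ can fail to be a finite sum of Dirac masses in many ways your hypothesis does not capture: it may carry infinitely many atoms (purely atomic yet not polygonal), it may have a singular continuous part supported on a Cantor set, or it may have an absolutely continuous part whose density is positive yet nowhere bounded away from zero. In none of these situations does your $C_c^\infty$ perturbation argument apply, and in none of them does Step~2 take over, since Step~2 presupposes finitely many vertex normals. Even the conclusion you claim for Step~1 (``purely atomic'') would not bridge to Step~2 without ruling out infinitely many atoms. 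The paper avoids this decomposition altogether: it argues directly that the \emph{support} $S_{h_0}$ has at most three points, taking any four points of $S_{h_0}$ (with no atomicity hypothesis) as input to a single construction. That is what you should aim for; the two-step split is where your argument breaks.
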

\begin{proof}
Let $K$ be a local maximizer of the functional $J$.
We have to prove that the support $S_K$ of the measure
$h_K''+h_K$ contains no more than three points.
We follow ideas contained in \cite{LRP} and \cite{LaNo}.

Assume, for the purpose of a contradiction, that $S_K$ contains at least
four points $\te_1 < \te_2 < \te_3 <\te_4$ in $(0,2\pi)$. We solve the four
differential equations
\begin{equation}\label{2.2}
 \left\lbrace
\begin{array}{l}
v_i''+v_i=\delta_{\te_i}\quad \te\in (\te_1-\ep,\te_4+\ep)\\
v_i(\te_1-\ep)=v_i(\te_4+\ep)=0,
\end{array}
\right.
\end{equation}
where $\delta_{\te_i}$ is the Dirac measure at point $\te_i$ and $\ep>0$
is chosen such that $\te_4+\ep - (\te_1-\ep)<2\pi$. Note that equations
\eqref{2.2} have unique solutions since we avoid the first eigenvalue of
the interval. We also extend each function $v_i$ by 0 outside
$(\te_1-\ep,\te_4+\ep)$. Now we can always find four numbers $\lambda_i$,
$i=1,\ldots,4$ such that the three following conditions hold, where we denote
by $v$ the function defined by $v=\sum_{i=1}^4\lambda_i v_i$:
\begin{equation}\label{2.3}
 v'(\te_1-\ep)=v'(\te_4+\ep)=0,\quad \int_0^{2\pi} v\,d\te=0\,.
\end{equation}
Then the function $v$ solves $v''+v=\sum_{i=1}^4\lambda_i \delta_{\te_i}$
globally on $(0,2\pi)$. Now, we use the optimality conditions \eqref{1.9},
\eqref{1.10} for the function $v$. We have
$$<\xi_0+{\xi_0}'',v>+
\mu \int_0^{2\pi} v\,d\te=<v''+v,\xi_0>=
\sum_{i=1}^4\lambda_i \xi_0(\te_i)=0\,.$$
Therefore, $v$ is admissible for the second order optimality condition
(it is immediate to check that the two first conditions in \eqref{1.11} are
satisfied by choosing $\lambda<0$ with $|\lambda|$ large enough). Since
the functional $J$ is quadratic, however, this would imply
$\int_0^{2\pi} a{v'}^2+bv^2\,d\te\leq 0$ which
is impossible by the assumptions on $a$ and $b$.
\end{proof}
\begin{remark} \rm
The examples given in the next section may give the impression that the
maximizers for such functionals are always segments. This is not the case.
Indeed, if we choose $a=c=d=0$ and $b$ a (positive) function equal to one
in a $\ep$ neighborhood of $0,2\pi/3$ and $4\pi/3$ and very small elsewhere,
the value for the equilateral triangle is of order $12\pi^2\ep/27$
while the value for the best segment is of order $\pi^2\ep/4$.
\end{remark}

\section{The farthest convex set}\label{sec3}
\subsection{Introduction}
There are many ways to define the distance between convex sets.
Among them we single out the classical Hausdorff distance:
$$d_H(K,L):=\max\{\rho(K,L),\rho(L,K)\},$$
where $\rho$ is defined by
$$\rho(A,B):=\sup_{x\in A}\inf_{y\in B} |x-y|$$
(For a survey of possible metrics we refer to
\cite{Gru}; for a detailed study of the Hausdorff distance
see \cite{HP}).
It is remarkable that the Hausdorff distance can also be defined
using the support functions, as $d_H(K,L)=\|h_K-h_L\|_\infty$.
Moreover the support function allows a
definition of the $L^2$ distance, introduced by McClure and Vitale in
\cite{McC-Vi}, by
$$d_2(K,L):=\left(\int_0^{2\pi} |h_K-h_L|^2\,d\te\right)^{1/2}\,.$$
In \cite{McM}, P. McMullen was able to determine the {\it diameter
in the sense of the Hausdorff distance} of the class $\A$ in any dimension.
More precisely, he proved
that all sets in $\A$ are contained
in the ball of radius $\pi/2$ centered at the origin. In terms of the support
function, this means that, for any convex set $K$ in $\A$, the maximum of
$h_K$ is at most $\pi/2$ (or $P(K)/4$). We will need the following more
precise result:
\begin{theorem}\label{theoineqs}
Let $K$ be any plane convex set with its Steiner point at the origin. Then
\begin{equation}\label{2.5}
\max h_K \leq \dfrac{P(K)}{4}\,\leq \min h_K + \max h_K,
\end{equation}
where both inequalities are sharp and saturated by any
line segment.
\end{theorem}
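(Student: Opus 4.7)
The plan is to normalize $P(K)=2\pi$ by scaling, reducing the two claims to $\max h\le \pi/2\le \min h+\max h$. By rotation I may place the angle of maximum at $\te=0$, so $M=h(0)$, and the Steiner condition then reads $\int h(\te)\cos\te\,d\te=\int h(\te)\sin\te\,d\te=0$.

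For the first inequality $\max h\le P/4$, my plan is to use the Green function representation from~\eqref{1.3}:
\[
h(0)=\int_{-\pi}^{\pi}G(t)\,R(t)\,dt,\qquad G(t)=\tfrac{1}{2}\bigl(1-|t|/\pi\bigr)\sin|t|,
\]
where $R=h''+h\ge 0$ is the radius-of-curvature measure with $\int R\,dt=P=2\pi$ and $\int R\cos t\,dt=\int R\sin t\,dt=0$. The crucial step is the pointwise trigonometric bound
\[
G(t)\le \tfrac{1}{4}+\tfrac{1}{2\pi}\cos t\quad\text{for }t\in[-\pi,\pi],
\]
which is equivalent to $(\pi-|t|)\sin|t|-\cos t\le \pi/2$. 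On $[0,\pi]$ the derivative of the left-hand side equals $(\pi-t)\cos t$, so it vanishes only at $t=\pi/2$ where the function attains value exactly $\pi/2$. Integrating the pointwise bound against the non-negative measure $R$ and annihilating the $\cos t$ term via the Steiner condition yields $h(0)\le (1/4)\int R=\pi/2$, and equality forces $R$ to be concentrated where the bound is tight---namely at $t=\pm\pi/2$---which characterises line segments.

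For the second inequality $P/4\le m+M$, I would set $m=h(\te_m)$ and write
\[
m+M=\int_{-\pi}^{\pi}\bigl[G(-s)+G(\te_m-s)\bigr]R(s)\,ds.
\]
The hard part is that the analogous pointwise lower bound $G(-s)+G(\te_m-s)\ge \tfrac{1}{4}+\lambda\cos s+\mu\sin s$ fails for general $\te_m$: when $\te_m=\pi/2$ the left-hand side vanishes at $s=5\pi/4$, which rules out any such linear trigonometric minorant. To overcome this obstacle I would exploit the fact that $\te_M$ and $\te_m$ are actually extrema of $h$, and minimise the \emph{linear} functional $h\mapsto h(\te_M)+h(\te_m)$ over the subclass of $\mathcal{H}$ in which the pointwise bounds $h(\te)\le h(\te_M)$ and $h(\te)\ge h(\te_m)$ remain active. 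The extreme points of this sliced convex set are Minkowski-indecomposable convex bodies---triangles and segments, by Shephard's theorem---and a direct computation on these produces the bound $m+M\ge \pi/2$, with equality attained precisely on segments, where $m=0$ and $M=\pi/2$.
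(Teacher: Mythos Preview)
Your treatment of the first inequality $\max h\le P/4$ is correct and in fact more informative than the paper's, which simply cites McMullen. The pointwise bound $G(t)\le\tfrac14+\tfrac{1}{2\pi}\cos t$ checks out exactly as you say, and integrating against $R\ge 0$ with $\int R\cos t\,dt=0$ (automatic from closure of the boundary) gives the result together with the equality characterisation.

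The second inequality, however, has a real gap. You propose to minimise the linear functional $h\mapsto h(\te_M)+h(\te_m)$ over the slice $\{h\in\mathcal{H}:h(\te)\le h(\te_M)\text{ and }h(\te)\ge h(\te_m)\text{ for all }\te\}$ and then invoke Shephard's theorem to assert that the extreme points are triangles and segments. But Shephard's theorem identifies the Minkowski-indecomposable planar bodies, and these coincide with the extreme points of the \emph{unconstrained} class $\mathcal{A}$. Once you impose the additional infinite family of linear inequalities forcing the max and min to occur at $\te_M$ and $\te_m$, new extreme points appear on the faces where these constraints bind---for instance quadrilaterals whose support function attains its maximum at two distinct angles. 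Shephard says nothing about those, so the reduction to triangles is not justified. (If instead you drop the extra constraints and minimise over all of $\mathcal{A}$, the extreme points are indeed triangles and segments, but the minimiser need no longer have its extrema at $\te_M,\te_m$, so you learn nothing about its value of $\min h+\max h$.) Even granting the reduction, you still owe the three-parameter verification that every triangle $T\in\mathcal{A}$ satisfies $\min h_T+\max h_T\ge\pi/2$, which you assert but do not carry out.

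The paper's route is quite different. It first symmetrises $K$ across the line $L$ through the origin and the point where $\min h_K$ is attained, replacing $K$ by $\tfrac12 K+\tfrac12\sigma_L(K)$; this preserves $\min h$, the perimeter and the Steiner point while not increasing $\max h$, so one may assume $K$ symmetric about $L$. It then considers the family $K_t=tK+(1-t)S$, $S$ the segment in $\mathcal{A}$ orthogonal to $L$, and shows $F(K_t)\le tF(K)+(1-t)F(S)$, which reduces the global inequality to proving that $S$ is a \emph{local} minimiser of $F=\min h+\max h$. That local statement is then checked by a first-order perturbation computation around the segment, again via the Green function~$G$.
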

\begin{proof}
The first inequality in \eqref{2.5} is due to McMullen,
who proved it in any dimension; see
Theorem 1 in \cite{McM}.
Let us prove the second inequality. Letting $B$
denote the unit ball, we introduce
$$\max h_K=\tau(K):=\min\{\tau >0 / K\subset \tau B\}\,,$$
$$\min h_K=\rho(K):=\max\{\rho >0 / \rho B\subset K\}\,.$$
The function $\tau(K)$ is convex with respect to the Minkowski sum,
which can be defined with the support function via
$$
h_{a K + b L} = a h_K + b h_L.
$$
In contrast, the
function $\rho(K)$ is concave, and as we are interested in the sum
$F(K):=\tau(K)+\rho(K)$ we can call upon no particular convexity
property.
The minimum of $h_K$ is attained at some point we call $P$
and the maximum at some point $Q$ (see Figure \ref{figure1}). Let us denote by $L$
the line containing the points $O$ and $P$ and by $\sigma_L$ the reflection
across $L$. If we replace the convex set $K$ by $\frac{1}{2} K +
\frac{1}{2} \sigma_L(K)$, we keep the Steiner point at the origin, we preserve
the perimeter, and we decrease $\tau$, because of convexity, without
changing $\rho$. Therefore, to look for minimum of $F(K)$, we
can restrict ourselves to convex sets symmetric with respect to the line $L$
passing through the point where $h_K$ attains its minimum. Now, let
$S$ be the segment in the class $\A$ which is orthogonal to the line $L$.

We introduce the family of convex sets $K_t:=tK+(1-t)S$ and study the behavior
of $t\mapsto F(K_t)$. Since the ball $t\rho(K) B$ is included in $K_t$ and
touches its boundary at $tP$, we have $\rho(K_t)=t\rho(K)$. Moreover, by
convexity $\tau(K_t)\leq t\tau(K) +(1-t)\tau(S)$. Therefore,
since $\tau(S)=F(S)$
\begin{equation}\label{2.5b}
 F(K_t)\leq tF(K)+(1-t)F(S)\,.
\end{equation}
In particular, this imples that if $F(K)<F(S)$, we would also have $F(K_t)<F(S)$
for $t$ near $0$. Thus, to prove the result it suffices to prove that a
segment is a local minimizer for $J$. Without loss of generality, we consider
the segment $\Sigma_0$ and perturbations respecting the symmetry with
respect to the line $\te=0$.  Let us therefore consider a perturbation of
the segment $\Sigma_0$,
replacing its ``radius of curvature'' $R_0=\pi(\delta_0+\delta_\pi)$ by
$$R_t=R_0+t[\varphi(x) - (\beta \delta_0+(1-\beta)\delta_\pi)]$$
where $\varphi(x)$ is a non negative measure. Since we can work in the class
of symmetric convex sets, we may assume $\varphi$ to be even. Moreover, we
have to assume that $\int_0^{2\pi} R_t = 2\pi$ and $\int_0^{2\pi} R_t\cos(\te)=0$
(the last relation $\int_0^{2\pi} R_t\sin(\te)=0$ is true by symmetry).
This implies that
\begin{equation}\label{2.5c}
\begin{array}{c}\vspace{2mm}
\int_0^{2\pi} \varphi =1, \quad\mbox{or}\quad \int_0^{\pi} \varphi =
\frac{1}{2}\,,\\
\int_0^{2\pi} \varphi \cos\te=2\beta - 1, \quad\mbox{or}\quad
\beta=\frac{1}{2} + \int_0^{\pi} \varphi \cos\te\,.
\end{array}
\end{equation}
Now, the support function $h_t$ of the perturbed convex set can be obtained
thanks to formulae \eqref{1.3}:
$$h_t(\te)=\frac{\pi}{2}|\sin\te|+t\left\{
\int_{-\pi}^{\pi} G(\tau) \varphi(\te +
\tau)\,d\tau
-\beta G(\te)-(1-\beta)G(\te-\pi)\right\},
$$
where $G$ denotes the Green function.
The function $h_t$ will have its maximum near $\pi/2$, so to first order,
\begin{equation}\label{2.5d}
 \max h_t=h_t(\frac{\pi}{2})+o(t)=\frac{\pi}{2}+t\left\{
\int_{-\pi}^{\pi} G(\tau) \varphi(\tau+\frac{\pi}{2})\,
d\tau - \frac{1}{2} \right\}+o(t)\,.
\end{equation}
In the same way, the minimum of $h_t$ will be attained near $0$ or near $\pi$
so to first order
\begin{equation}\label{2.5e}
\begin{array}{l}
  \min h_t=\min(h_t(0),h_t(\pi))+o(t)=\\
\hspace*{2cm} t\min\left\{
\int_{-\pi}^{\pi} G(\tau) \varphi(\tau)\,
d\tau ,  \int_{-\pi}^{\pi} G(\tau) \varphi(\tau+\pi)\,
d\tau \right\}+o(t)\,.
\end{array}
\end{equation}
Therefore, we have to prove that
\begin{equation}\label{2.5f}
\int_{-\pi}^{\pi} G(\tau) \varphi(\tau+\frac{\pi}{2})\,
d\tau + \int_{-\pi}^{\pi} G(\tau) \varphi(\tau)\,
d\tau - \frac{1}{2} >0
\end{equation}
and
\begin{equation}\label{2.5g}
\int_{-\pi}^{\pi} G(\tau) \varphi(\tau+\frac{\pi}{2})\,
d\tau + \int_{-\pi}^{\pi} G(\tau) \varphi(\tau+\pi)\,
d\tau - \frac{1}{2} >0\,.
\end{equation}
Let us prove for example \eqref{2.5f}; the other inequality is similar. Letting
$$A:=\int_{-\pi}^{\pi} G(\tau) \varphi(\tau+\frac{\pi}{2})\,
d\tau + \int_{-\pi}^{\pi} G(\tau) \varphi(\tau)\,
d\tau= \int_{-\pi}^{\pi} (G(\tau)+G(\tau-\frac{\pi}{2})) \varphi(\tau)\,
d\tau$$
and using the fact that $\varphi$ is even,
$$A=\int_{0}^{\pi} [G(\tau)+G(\tau-\frac{\pi}{2})+G(-\tau)+
G(-\tau-\frac{\pi}{2})] \varphi(\tau)\, d\tau\,.$$
Now, it is elementary to check that the function
$\tau\mapsto G_4(\tau):=G(\tau)+G(\tau-\frac{\pi}{2})+G(-\tau)+
G(-\tau-\frac{\pi}{2})$ is always greater or equal to one (see Figure
\ref{figure2})
\begin{figure}[h!]
\begin{center}
\scalebox{.7}[0.55]{\includegraphics{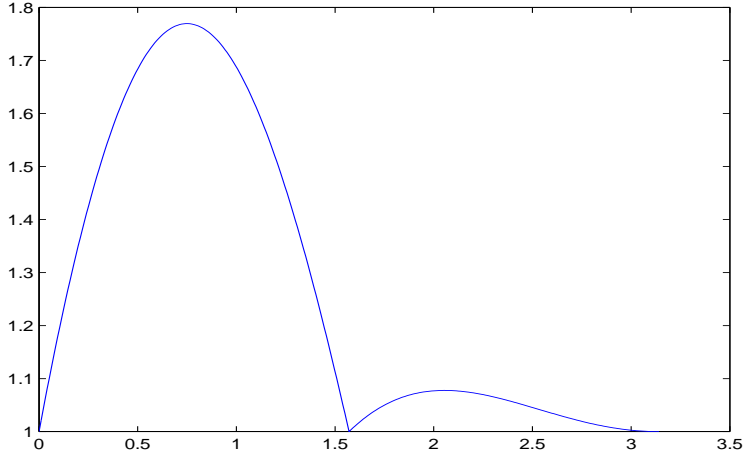}}
\caption{The function $\tau\mapsto G(\tau)+G(\tau-\frac{\pi}{2})+G(-\tau)+
G(-\tau-\frac{\pi}{2}$).\label{figure2}}
\end{center}
\end{figure}
so we have $A\geq \int_{0}^{\pi} \varphi(\tau)\, d\tau=\frac{1}{2}$.
Moreover, since the function $G_4$ is equal to one only for $\tau=0,\pi/2$
or $\pi$, the inequality will be strict unless the support of $\varphi$
is concentrated at the four points $-\pi/2, 0, \pi/2, \pi$. This last case
actually corresponds to a (thin) rectangle
$K_\alpha=[-\alpha,\alpha]\times [-\pi/2+\alpha,\pi/2-\alpha]$ for which a
direct computation shows that $\min h_{K_\alpha}=\alpha/2$ and
$\max h_{K_\alpha}=\left(\alpha^2 + (\pi-\alpha)^2\right)^{1/2}/2$,
and $F(K_\alpha) > \pi/2=F(S)$ follows immediately.
\end{proof}

Another consequence of McMullen's result cited above is
that the Hausdorff distance between two sets in $\A$ is always less or equal
to $\pi/2$, the upper bound being obtained by two orthogonal segments.

\smallskip\noindent
In this section, we want to deal with a similar question, namely to find the
{\it farthest convex set} in the class $\A$ from a given convex set,
as measured by either of the two
distances defined above. More precisely, letting $C$ be a given convex set
in the class $\A$, we wish to find the convex set $K_C$ such that
$$d(C,K_C)=\max\{d(C,K) : K\in \A\},$$
where $d$ may stand either for $d_H$ or for $d_2$.

\smallskip\noindent
First of all, let us give an existence result for such a problem.
\begin{theorem}\label{theoexist1}
Let $d(.,.)$ be a distance function for convex sets that
behaves continuously under
uniform convergence of the support functions. Then the problem
\begin{equation}
 \max\{d(C,K) : K\in \A\}
\end{equation}
has a solution.
\end{theorem}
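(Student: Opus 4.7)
The plan is to apply the direct method of the calculus of variations: take a maximizing sequence, extract a subsequence converging in the appropriate topology, show that the limit still lies in $\A$, and conclude by continuity of the functional.

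First, I would observe that the problem is well-posed in the sense that $\sup\{d(C,K) : K\in\A\}$ is finite. By Theorem~\ref{theoineqs}, every $K\in\A$ satisfies $0\le h_K \le \pi/2$, so the family of support functions $\{h_K : K\in\A\}$ is uniformly bounded. Moreover, each $h_K$ is the support function of a convex body contained in the disk of radius $\pi/2$, which implies it is Lipschitz with constant at most $\pi/2$ on $\T$ (this is a standard fact about support functions of sets contained in a ball). Thus the family is uniformly bounded and equicontinuous. Combined with any reasonable definition of distance that respects uniform bounds, this immediately implies that $\sup\{d(C,K)\}$ is finite.

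Next, let $(K_n)_{n\in\N}\subset\A$ be a maximizing sequence, so that $d(C,K_n)\to \sup\{d(C,K) : K\in\A\}$. By Arzel\`a--Ascoli applied to $(h_{K_n})$, after passing to a subsequence we can assume $h_{K_n}$ converges uniformly on $\T$ to some continuous $h^\star$. This is the usual statement of Blaschke's selection theorem rewritten in terms of support functions.

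The main (and only real) point to verify is that $h^\star$ is the support function of a convex set belonging to $\A$. Nonnegativity $h^\star\ge 0$ is preserved by uniform limits. The perimeter normalization $\int_0^{2\pi}h^\star\,d\te=2\pi$ and the two orthogonality conditions \eqref{orthcond} encoding $s(K^\star)=O$ pass to the limit because uniform convergence implies convergence of integrals against $1$, $\cos\te$, and $\sin\te$. The remaining constraint $h^{\star\prime\prime}+h^\star\ge 0$ in the sense of measures follows from the fact that uniform convergence on $\T$ implies distributional convergence of both $h_{K_n}$ and $h_{K_n}''$, and nonnegativity of a distribution is stable under distributional limits: testing $h_{K_n}''+h_{K_n}$ against any nonnegative smooth test function yields a sequence of nonnegative numbers whose limit is $\langle h^{\star\prime\prime}+h^\star,\varphi\rangle$. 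So $h^\star$ defines a (possibly degenerate) convex set $K^\star\in\A$, noting that degenerate limits, namely segments, are explicitly permitted in $\A$.

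Finally, since by hypothesis $d(C,\,\cdot\,)$ is continuous for uniform convergence of support functions, we have $d(C,K^\star)=\lim_n d(C,K_n)=\sup\{d(C,K):K\in\A\}$, so $K^\star$ is a maximizer. The only step that is not entirely mechanical is the verification that $\A$ is closed under uniform convergence of support functions, and in particular that degenerate limits (segments) are allowed by our definition of $\A$; once this is acknowledged, no further subtlety arises.
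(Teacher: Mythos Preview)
Your argument is correct and follows the same direct-method skeleton as the paper, but the compactness step is handled differently. The paper proves a separate lemma bounding $\|h\|_{H^1}^2$ uniformly on $\mathcal{H}$ (via $\int h(h''+h)\ge 0$ and the spectral gap at the eigenvalue $4$), then invokes Blaschke selection together with the compact embedding $H^1(\T)\hookrightarrow C(\T)$ to extract a uniformly convergent subsequence. You instead use McMullen's bound $h_K\le \pi/2$ from Theorem~\ref{theoineqs} and the standard fact that support functions of sets contained in $B(0,R)$ are $R$-Lipschitz, obtaining equicontinuity directly and applying Arzel\`a--Ascoli. Your route is more elementary and avoids the $H^1$ lemma entirely; the paper's $H^1$ bound, on the other hand, is of some independent interest and gives quantitative control in a norm adapted to the functionals studied elsewhere in the article. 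You are also more explicit than the paper in checking that the constraints defining $\A$ (perimeter, Steiner point, and $h''+h\ge 0$) pass to the uniform limit, which the paper leaves implicit.
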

\begin{proof}
For the proof we will use the following Lemma:
\begin{lemma}\label{lemma1}
For any $h$ in the set ${\mathcal H}$ (defined in (\ref{Hdef})), we have
$$\|h\|^2_{H^1}\leq 16\pi/3\,.$$
\end{lemma}
\begin{proofof}{the Lemma}
For any $h$ in ${\mathcal H}$, we have
\begin{equation}\label{2.10}
 0\leq \int_0^{2\pi} h(h+h'')\,d\te= \int_0^{2\pi} h^2\,d\te -
\int_0^{2\pi} {h'}^2\,d\te\,.
\end{equation}
We now use the fact that the first eigenvalues of the problem
$$
\left\lbrace\begin{array}{c}
 - h''=\lambda h \\
h \mbox{ $2\pi$-periodic}
\end{array}\right.
$$
are $0$ (associated with the constant eigenfunction), $1$ (of multiplicity $2$
associated with
$\sin\te$ and $\cos\te$), $4$ (of multiplicity $2$
associated with
$\sin 2\te$ and $\cos 2\te$). Thus, on $\A$ we can write a minimizing
formula:
\begin{equation}\label{2.11}
 4=\min_{v \in \A}{\left\{\dfrac{\int_0^{2\pi}{v'}^2\,d\te}{\int_0^{2\pi}{v}^2\,d\te}\ s.t.
\int_0^{2\pi}v=\int_0^{2\pi}v\cos\te=\int_0^{2\pi}v\sin\te=0\right\}}\,.
\end{equation}
Applying (\ref{2.11}) to $v=h-1$ yields
$$\int_0^{2\pi}{h'}^2\geq 4 \int_0^{2\pi}(h-1)^2 = 4 \int_0^{2\pi} h^2
- 8\pi\,,$$
or
\begin{equation}\label{2.12}
 \int_0^{2\pi} h^2 \leq \frac{1}{4}\int_0^{2\pi}{h'}^2 + 2\pi\,.
\end{equation}
Combining \eqref{2.10} with \eqref{2.12} leads to
$$\frac{3}{4}\int_0^{2\pi} h^2 \leq 2\pi,$$
and the result follows, once again applying \eqref{2.10} and summing
the two last inequalities.
\end{proofof}

We return to the proof of Theorem \ref{theoexist1}. Let $K_n$ be a
maximizing sequence of convex sets and $h_n$ be the corresponding support
functions. Since the perimeter of $K_n$ is uniformly bounded and the
sets $K_n$ contain the origin, the Blaschke selection theorem applies:
there exists a subsequence, still denoted with the same index, which
converges in the Hausdorff sense to a convex set $K$.
According to Lemma \ref{lemma1}, the support functions $h_n$ are
bounded in $H^1(\T)$, and consequently we
may assume that the sequence converges
uniformly to a function $h$, which is necessarily the support function
of $K$.  Finally, since the distance $d$ has been assumed continuous for this kind of
convergence, the existence of a maximizer follows.
\end{proof}
\medskip
\subsection{The farthest convex set for the Hausdorff distance}
For the Hausdorff distance, we are able to prove that the farthest convex set
is always a segment:
\begin{theorem}
If $C$ is a given convex set
in the class $\A$, then the convex set $K_C$ for which
$$d_H(C,K_C)=\max\{d_H(C,K) : K\in \A\}$$
is a segment. More precisely, it is any segment orthogonal to the
line $OQ$ where $Q$ is any point at which $h_C$ is maximal.
\end{theorem}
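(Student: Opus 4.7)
The plan is to combine the identity $d_H(C,K) = \|h_C - h_K\|_{\infty}$ with the two inequalities of Theorem~\ref{theoineqs}. Since every $K \in \A$ satisfies $P(K) = 2\pi$, Theorem~\ref{theoineqs} yields the McMullen bound $\max h_K \leq \pi/2$, while the same theorem applied to $C$ yields $\min h_C \geq \pi/2 - \max h_C$. Combined with the obvious bounds $h_C \leq \max h_C$ and $h_K \geq 0$, for every $\theta \in \T$
\begin{align*}
h_K(\theta) - h_C(\theta) &\leq \max h_K - \min h_C \leq \tfrac{\pi}{2} - \bigl(\tfrac{\pi}{2} - \max h_C\bigr) = \max h_C, \\
h_C(\theta) - h_K(\theta) &\leq \max h_C - \min h_K \leq \max h_C.
\end{align*}
Therefore $d_H(C,K) \leq \max h_C$ for every $K \in \A$.

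The matching lower bound is attained by the segment proposed in the theorem. Pick any $\theta_Q$ with $h_C(\theta_Q) = \max h_C$, so that the boundary point $Q = h_C(\theta_Q)e^{i\theta_Q}$ makes $OQ$ point in the direction $e^{i\theta_Q}$. The segment $\Sigma_{\theta_Q}\in\A$ lies along $ie^{i\theta_Q}$ and is therefore orthogonal to $OQ$, and by \eqref{1.7} its support function $\frac{\pi}{2}|\sin(\theta-\theta_Q)|$ vanishes at $\theta_Q$. Consequently $|h_C(\theta_Q)-h_{\Sigma_{\theta_Q}}(\theta_Q)| = \max h_C$, which together with the upper bound gives $d_H(C,\Sigma_{\theta_Q}) = \max h_C$, the global maximum.

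The main subtlety is recognizing that the two inequalities of Theorem~\ref{theoineqs} must be applied to two different sets ($C$ for the second inequality, $K$ for McMullen's) so that the right minimum or maximum enters each bound; once this is seen, the proof reduces to elementary bookkeeping. For the ``characterization'' half of the statement, tracing back saturation shows that any $\theta^*$ realizing $h_C(\theta^*)-h_K(\theta^*) = \max h_C$ forces $h_K(\theta^*) = 0$ and $h_C(\theta^*) = \max h_C$; the vanishing of $h_K$ at one point makes $K$ a segment (as recalled after Theorem~\ref{theooptimcond}), specifically $K = \Sigma_{\theta^*}$ with $\theta^*$ a maximal direction of $h_C$.
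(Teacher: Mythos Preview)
Your argument is correct and rests on exactly the same two ingredients as the paper's proof: McMullen's bound $\max h_K\le\pi/2$ applied to $K$, and the second inequality of Theorem~\ref{theoineqs} applied to $C$. The paper, however, works with the geometric definition $d_H=\max(\rho(K,C),\rho(C,K))$, bounds each $\rho$ separately (introducing an auxiliary segment $\Sigma_1$ through the point $P$ where $\min h_C$ is attained), and only at the end invokes the second inequality to compare $d_H(\Sigma_1,C)$ with $d_H(\Sigma,C)$. Your route via $d_H=\|h_C-h_K\|_\infty$ collapses this detour: the two pointwise estimates on $h_K-h_C$ and $h_C-h_K$ give the bound $\max h_C$ in one line, and the attainment by $\Sigma_{\theta_Q}$ is immediate. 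So the approach is the same in substance, but your packaging is noticeably shorter.

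One small caveat on your final paragraph: to characterize \emph{all} maximizers you would also need to treat the case where $\|h_C-h_K\|_\infty=\max h_C$ is realized by $h_K(\theta^*)-h_C(\theta^*)=\max h_C$ rather than by $h_C-h_K$. Tracing equality in your first displayed chain then forces $\max h_K=\pi/2$ (so $K$ is again a segment, by the equality case of McMullen's inequality) and $\min h_C+\max h_C=\pi/2$; this does not by itself pin down the orientation of the segment. The paper's own proof does not establish a full characterization either, so this is not a gap relative to the stated theorem, but your sentence suggests more than you have actually shown.
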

\begin{figure}[h!]
\begin{center}
\scalebox{.65}{\includegraphics{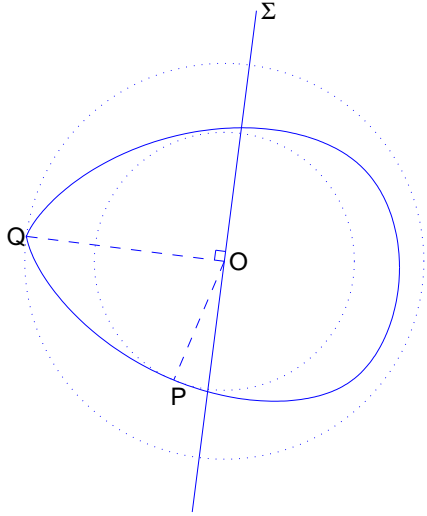}}
\caption{The farthest segment $\Sigma$ for the Hausdorff
distance.\label{figure1}}
\end{center}
\end{figure}

\begin{proof}
 Let $B_1$ be the largest ball centered at $O$ and contained in $C$ and $B_2$
the smallest ball centered at O which contains $C$. We denote by $R_1$
(resp. $R_2$) the radius of $B_1$ (resp. $B_2$). Let $P$, resp. $Q$,
be contact points of these balls with the boundary of $C$ (see Figure
\ref{figure1}). We also denote by $\Sigma_1$ the segment (centered at 0)
containing $P$ and by $\Sigma$ the segment (centered at 0)
orthogonal to $OQ$.

It is easy to see that $\Sigma_1$ is optimal, among all segments $S$, to
maximize $\rho(S,C)$ while $\Sigma$ is optimal to
maximize $\rho(C,S)$. Now, we are going to prove that, for any convex set
$K$ in $\A$:
\begin{equation}\label{2.13}
 \rho(K,C)\leq \rho(\Sigma_1,C)\quad\mbox{and}\quad \rho(C,K)\leq
\rho(C,\Sigma)\,.
\end{equation}
For the first inequality, let us consider any point $M$ in $K$. By
construction of the ball $B_1$:
$$d(M,C)\leq d(M,B_1)=OM-R_1\,.$$
Now, by the first inequality of theorem \ref{theoineqs},
$OM\leq Per(K)/4=\pi/2$ and the result follows taking the supremum in $M$
since $\rho(\Sigma_1,C)=\pi/2-R_1$.

We prove now the second inequality in \eqref{2.13} for any convex body $K$
(the result is already clear for segments as mentioned above). Since
the Steiner point lies in the interior, for any point $M\in\partial C$
$$d(M,K)<OM\leq OQ=\rho(C,\Sigma)\,.$$
Therefore, taking the supremum in $M$, $\rho(C,K)\leq \rho(C,\Sigma)$.\\
From \eqref{2.13} it follows that for any set $K$:
$$d_H(K,C)\leq \max(d_H(\Sigma_1,C),d_H(\Sigma,C))\,.$$
Now, we use the second inequality in Theorem \ref{theoineqs}, which
can be written
$$\rho(\Sigma_1,C)=\pi/2-R_1\leq R_2=\rho(C,\Sigma)\,.$$
Since, however, $\rho(C,\Sigma_1)\leq \rho(C,\Sigma)$, we have
$$d_H(\Sigma_1,C)\leq \rho(C,\Sigma)\leq d_H(\Sigma,C)$$
which gives the desired result.
\end{proof}

\subsection{The farthest convex set for the $L^2$ distance}
For the $L^2$ distance, the result is similar: the
convex set farthest from any given convex set
will be a segment. The proof is more complicated
and relies on our Theorem \ref{theo2.1}.
\begin{theorem}
If $C$ is a given convex set
in the class $\A$, then the convex set $K_C$ for which
$$d_2(C,K_C)=\max\{d_2(C,K) : K\in \A\}$$
is a segment. More precisely, it is any segment $\Sigma_\alpha$
with $\alpha$ which maximizes the one variable function
$\alpha\mapsto \int_0^\pi h_C(\te+\alpha)\sin\te\,d\te$.
\end{theorem}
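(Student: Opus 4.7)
The plan is to apply Theorem \ref{theo2.1} to the squared $L^2$-distance, reducing the candidate maximizers to triangles and needles, then to rule out triangles, and finally to identify the optimal needle by a one-variable computation.

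First I would recognize that
\[
J(K)\;:=\;d_2(C,K)^2\;=\;\int_0^{2\pi} h_K^2\,d\theta\;-\;2\int_0^{2\pi} h_C h_K\,d\theta\;+\;\int_0^{2\pi} h_C^2\,d\theta
\]
is of the form \eqref{intfnl} with $a\equiv 1$, $b\equiv 0$, $c=-2h_C$, $d\equiv 0$ (plus an irrelevant additive constant), so Theorem \ref{theo2.1} tells me that every local maximizer in $\mathcal A$ is either a segment or a triangle. Existence of a global maximizer follows from Theorem \ref{theoexist1}, since the triangle inequality together with Cauchy--Schwarz give $|d_2(C,K_n)-d_2(C,K)|\le \sqrt{2\pi}\,\|h_{K_n}-h_K\|_\infty$, so $d_2$ is continuous under uniform convergence of support functions.

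Next I would handle the needle optimization explicitly. For $\Sigma_\alpha$ one has $\int_0^{2\pi}h_\alpha^2\,d\theta=\pi^3/4$, independent of $\alpha$, so maximizing $d_2(C,\Sigma_\alpha)^2$ amounts to extremizing $\int_0^{2\pi}h_C h_\alpha\,d\theta$. The substitution $\phi=\theta-\alpha$ and the splitting of $|\sin\phi|$ over $[0,\pi]$ and $[\pi,2\pi]$ give
\[
\int_0^{2\pi}h_C h_\alpha\,d\theta\;=\;\frac{\pi}{2}\bigl[f(\alpha)+f(\alpha+\pi)\bigr],\qquad f(\alpha):=\int_0^\pi h_C(\theta+\alpha)\sin\theta\,d\theta.
\]
Expanding $h_C$ in Fourier series and using that $\int_0^\pi e^{in\theta}\sin\theta\,d\theta$ vanishes for odd $n\ne\pm 1$, combined with $c_{\pm 1}(h_C)=0$ from the Steiner condition, shows $f(\alpha+\pi)=f(\alpha)$. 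Consequently the best segment corresponds to the $\alpha$ that extremizes $f$, as asserted.

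The main obstacle is to rule out triangles. Suppose for contradiction that $T\in\mathcal A$ is a triangular maximizer with $S_T=\{\theta_1,\theta_2,\theta_3\}$; the first-order condition of Theorem \ref{theooptimcond} reduces to $\xi_0''+\xi_0=2(h_T-h_C)-\mu$ with $\xi_0\le 0$ and $\xi_0(\theta_j)=0$. A direct extension of the ODE trick from Theorem \ref{theo2.1}, using an auxiliary fourth node $\theta_4\notin S_T$ and solving $v_i''+v_i=\delta_{\theta_i}$, $i=1,\ldots,4$, is not immediately sufficient, because the first-order neutrality condition forces $\lambda_4\xi_0(\theta_4)=0$ and typically $\lambda_4=0$, collapsing the system to the trivial solution. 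I therefore expect the argument to be a direct comparison: for any such $T$, construct a one-parameter family $K_t\in\mathcal A$, $t\in[0,1]$, with $K_0=T$ and $K_1$ a needle---obtained, say, by collapsing the shortest side of $T$ while reshaping the remaining two sides to preserve perimeter and Steiner point---and show that $t\mapsto J(K_t)$ is strictly increasing. This monotonicity reduces to a sign analysis of the first variation of $\int h^2-2\int h h_C$ along the degeneration and is the chief technical step. Once it is in hand, no triangle is optimal, the maximizer must be a needle, and the preceding paragraph fixes its orientation, completing the proof.
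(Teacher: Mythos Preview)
Your reduction via Theorem~\ref{theo2.1}, the existence check, and the one-variable analysis of the segments are all fine and match the paper. The genuine gap is in the elimination of triangles: you propose to pick \emph{one} degeneration path $K_t$ from $T$ to a needle (``collapse the shortest side'') and to show that $t\mapsto J(K_t)$ is strictly increasing. You do not carry this out, and there is no reason to expect it to hold. The functional $J(K)=\int (h_K-h_C)^2$ depends on the arbitrary set $C$, and along a fixed geometric degeneration the first variation at $t=0$ will have an indeterminate sign in general; if $T$ really were a local maximizer, that first variation would even have to be $\le 0$. So ``monotone increase along a specific path'' cannot be the mechanism by itself: one must first extract information from the criticality of $T$ and then find, among \emph{many} admissible directions, one along which $J$ strictly increases.

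This is precisely what the paper does, and its argument is quite different from yours. It writes the first-order conditions for $T$ as a critical point in the three-parameter family of triangles (normals $\theta_1,\theta_2,\theta_3$) and shows that these force
\[
I:=\int_0^{2\pi}(h_T-h_C)\,h_T\,d\theta=0,
\]
either directly or, in the equilateral case, after a short additional computation. Then it uses the \emph{Minkowski} interpolation $K_t=(1-t)T+t\Sigma_\alpha$ (so $h_{K_t}=(1-t)h_T+t h_\alpha$, which stays in $\mathcal A$ automatically). With $I=0$ the derivative of $J(K_t)$ at $t=0$ is
\[
g(\alpha)=\pi\int_0^{2\pi}(h_T-h_C)(\theta)\,|\sin(\theta-\alpha)|\,d\theta,
\]
a continuous $\pi$-periodic function of $\alpha$ whose mean over $[0,2\pi]$ vanishes. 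Hence either $g(\alpha)>0$ for some $\alpha$, contradicting local maximality of $T$, or $g\equiv 0$. In the latter case, feeding $v=h_{\Sigma_\alpha}-h_T$ and then $v=h_{\Sigma_\alpha}$ into the first-order condition \eqref{1.10} yields $\xi_0(\alpha)+\xi_0(\alpha+\pi)=0$ for all $\alpha$ (so $\xi_0\equiv 0$, since $\xi_0\le 0$) and then $\mu=0$, whence $h_T=h_C$, i.e.\ $T=C$ is the global \emph{minimizer}, not a maximizer. The key ideas you are missing are thus (i) exploiting criticality among triangles to get $I=0$, and (ii) varying $\alpha$ over all segments and using the zero-mean trick, rather than committing to a single degeneration path.
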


\begin{proof}
In the proof we denote by $C$ a fixed convex set in the class $\A$.
An immediate consequence of Theorem \ref{theo2.1} applied to
the functional $J$ defined by
$$J(K)=\int_0^{2\pi}(h_K-h_C)^2\,d\te=\int_0^{2\pi}h_K^2- 2h_Ch_K (+h_C^2)
\,d\te$$
is that the farthest convex set is either a triangle or a segment.
Thus, to prove the result, we need to exclude the first possibility.

Let $T$ be a triangle that we assume to be a critical point for the
functional $J:K\mapsto d_2^2(C,K)$. Each triangle in the class $\A$
will be uniquely characterized by its three angles $(\te_1,\te_2,\te_3)$
such that $e^{i\te_k}$ is the normal vector to each side. The only
restrictions we need to put on these angles are
\begin{equation}\label{5.1}
 0<\te_2-\te_1<\pi,\;0<\te_3-\te_2<\pi,\;0<2\pi+\te_1-\te_3<\pi,\,.
\end{equation}
The lengths of the sides will be denoted by $a_1,a_2,a_3$. According
to the law of sines,
given that the perimeter of $T$ is $2\pi$, the three lengths are given by:
\begin{equation}\label{5.2}
\begin{array}{l}
a_1=\dfrac{2\pi sin(\te_3-\te_2)}{sin(\te_3-\te_2)+sin(\te_2-\te_1)+
sin(\te_1-\te_3)},\\
a_2=\dfrac{2\pi sin(\te_1-\te_3)}{sin(\te_3-\te_2)+
sin(\te_2-\te_1)+ sin(\te_1-\te_3)},\\
a_3=\dfrac{2\pi sin(\te_2-
\te_1)}{sin(\te_3-\te_2)+sin(\te_2-\te_1)+sin(\te_1-\te_3)}\,.
\end{array}
\end{equation}
Note that the denominator $sin(\te_3-\te_2)+sin(\te_2-\te_1)+
sin(\te_1-\te_3)$ can also be written $4 sin(\frac{\te_3-\te_2}{2})
sin(\frac{\te_2-\te_1}{2})sin(\frac{\te_1-\te_3}{2})$.

If $A_1,A_2,A_3$ denote the vertices of the triangle, from the relation
$\vec{A_1A_2}+\vec{A_2A_3}+\vec{A_3A_1}=\vec{0}$ rotated by $\pi/2$, we get
\begin{equation}\label{5.3}
 a_1\cos\te_1+a_2\cos\te_2+a_3\cos\te_3=0\quad\mbox{and}\quad
a_1\sin\te_1+a_2\sin\te_2+a_3\sin\te_3=0\,.
\end{equation}
The support function (with the Steiner point at the origin) $h_T(\te)$ of the
triangle $T$ can be calculated with the aid of formula \eqref{1.3} using the
fact that the radius of curvature of $T$ is given by $R=a_1\delta_{\te_1}+
a_2\delta_{\te_2}+a_3\delta_{\te_3}$, according to \eqref{1.1}. One possible
expression for $h$ is:
\begin{equation}\label{5.4}
 h_T(\te)=\left\lbrace
\begin{array}{lc}
\dis\frac{1}{2\pi}\sum_{k=1}^3 a_k\te_k\sin(\te-\te_k),&\te\leq \te_1\,\mbox{or}\,
\te\geq\te_3\\
\dis\frac{1}{2\pi}\sum_{k=1}^3 a_k\te_k\sin(\te-\te_k)+a_1\sin(\te-\te_1),&
\te_1\leq\te\leq \te_2\\
\dis\frac{1}{2\pi}\sum_{k=1}^3 a_k\te_k\sin(\te-\te_k)-a_3\sin(\te-\te_3),&
\te_2\leq\te\leq \te_3,
\end{array}
\right.
\end{equation}
where we have used the fact that, by \eqref{5.3} for any $\te$,
$\sum_{k=1}^3 a_k\sin(\te-\te_k)=0$. We will denote by $\phi(\te)$ the function
$$\phi(\te)=\frac{1}{2\pi}\sum_{k=1}^3 a_k\te_k\sin(\te-\te_k)\,.$$
Now, if $T$ is a critical point of the functional
$\int_0^{2\pi}(h_K-h_C)^2\,d\te$ among any convex set in $\A$, it is also
a critical point among triangles. So we can express that the derivatives
with respect to $\te_1,\te_2,\te_3$ of
$$J(\te_1,\te_2,\te_3)=\int_0^{2\pi}(h_T-h_C)^2\,d\te\,,$$
where $h_T$ is defined in \eqref{5.4}, are zero, that is
$$\int_0^{2\pi}(h_T-h_C)\dfrac{\partial h_T}{\partial \te_j}\,d\te=0,\
j=1,2,3\,.$$
According to \eqref{5.4}, we have (note that $h_T$ is continuous):
\begin{equation}\label{5.5}
\begin{array}{l}\vspace{2mm}
 \dfrac{\partial h_T}{\partial \te_1}=\dfrac{\partial \phi}{\partial \te_1}
+(\dfrac{\partial a_1}{\partial \te_1}\sin(\te-\te_1)-a_1\cos(\te-\te_1))
\chi_{[\te_1,\te_2]}-
\dfrac{\partial a_3}{\partial \te_1}\sin(\te-\te_3)\chi_{[\te_2,\te_3]}\,,\\
\vspace{2mm}
\dfrac{\partial h_T}{\partial \te_2}=\dfrac{\partial \phi}{\partial \te_2}
+\dfrac{\partial a_1}{\partial \te_2}\sin(\te-\te_1)\chi_{[\te_1,\te_2]}-
\dfrac{\partial a_3}{\partial \te_2}\sin(\te-\te_3)\chi_{[\te_2,\te_3]}\,,\\
\dfrac{\partial h_T}{\partial \te_3}=\dfrac{\partial \phi}{\partial \te_3}
+\dfrac{\partial a_1}{\partial \te_3}\sin(\te-\te_1)\chi_{[\te_1,\te_2]}-
(\dfrac{\partial a_3}{\partial \te_3}\sin(\te-\te_3)-a_3\cos(\te-\te_3))
\chi_{[\te_2,\te_3]}\,.
\end{array}
\end{equation}
But since $\frac{\partial \phi}{\partial \te_j}$, for $j=1,2,3$
is a linear combination of
$\sin(\te-\te_k)$ and $\cos(\te-\te_k)$, the contributions
$\int_0^{2\pi}(h_T-h_C)\dfrac{\partial \phi}{\partial \te_k}\,d\te$ are zero
because $\int_0^{2\pi}h\cos\te d\te=\int_0^{2\pi}h\sin\te d\te=0$ for both
$h_T$ and $h_C$.
Therefore, the optimality conditions at the critical triangle $T$ can be written
\begin{equation}\label{5.6}
\left\lbrace\begin{array}{l}
\dfrac{\partial a_1}{\partial \te_1}\int_{\te_1}^{\te_2}(h_T-h_C)\sin(\te-\te_1)
-a_1 \int_{\te_1}^{\te_2}(h_T-h_C)\cos(\te-\te_1) - \\
\hspace*{1cm} \dfrac{\partial a_3}{\partial \te_1}
\int_{\te_2}^{\te_3}(h_T-h_C)\sin(\te-\te_3)
= 0\\
\dfrac{\partial a_1}{\partial \te_2}\int_{\te_1}^{\te_2}(h_T-h_C)\sin(\te-\te_1)
 -
\dfrac{\partial a_3}{\partial \te_2}\int_{\te_2}^{\te_3}(h_T-h_C)\sin(\te-\te_3)
= 0\\
\dfrac{\partial a_1}{\partial \te_3}\int_{\te_1}^{\te_2}(h_T-h_C)\sin(\te-\te_1)
+a_3 \int_{\te_2}^{\te_3}(h_T-h_C)\cos(\te-\te_3) - \\
\hspace*{1cm} \dfrac{\partial a_3}{\partial \te_3}\int_{\te_2}^{\te_3}(h_T-h_C)
\sin(\te-\te_3) = 0\,.\\
\end{array}\right.
\end{equation}
Using \eqref{5.2} we can explicitly compute each partial derivative
$\frac{\partial a_i}{\partial \te_j}$. For example, for $a_1$ they work out to be
\begin{equation}\label{5.7}
\begin{array}{l} \vspace{2mm}
\dfrac{\partial a_1}{\partial \te_2}=\frac{\pi}{2}\cot\frac{\te_1-\te_3}{2}
\frac{1}{\sin^2\frac{\te_2-\te_1}{2}},\quad
\dfrac{\partial a_1}{\partial \te_3}=-\frac{\pi}{2}\cot\frac{\te_2-\te_1}{2}
\frac{1}{\sin^2\frac{\te_1-\te_3}{2}}\\
\dfrac{\partial a_1}{\partial \te_1}=-\dfrac{\partial a_1}{\partial \te_2}-
\dfrac{\partial a_1}{\partial \te_3}=-\frac{\pi}{4}
\dfrac{\sin(\te_1-\te_2)+\sin(\te_1-\te_3)}{\sin^2\frac{\te_2-
\te_1}{2}\sin^2\frac{\te_1-\te_3}{2}}\,.
\end{array}
\end{equation}
In order to simplify the partial derivatives, we introduce the following
integrals:
\begin{equation}\label{5.8}
\begin{array}{ccc}
I_1=\int_{\te_1}^{\te_2}(h_T-h_C)\sin(\te-\te_1) &
I_2=\int_{\te_1}^{\te_2}(h_T-h_C)\sin(\te-\te_2)\\
J_1=\int_{\te_2}^{\te_3}(h_T-h_C)\sin(\te-\te_2) &
J_2=\int_{\te_2}^{\te_3}(h_T-h_C)\sin(\te-\te_3)\\
K_1=\int_{\te_3}^{\te_1+2\pi}(h_T-h_C)\sin(\te-\te_3) &
K_2=\int_{\te_3}^{\te_1+2\pi}(h_T-h_C)\sin(\te-\te_1)
\end{array}
\end{equation}
In consequence, the second equality in \eqref{5.6}
simplifies to:
\begin{equation}\label{5.9}
\dfrac{1}{\sin^2\frac{\te_2-\te_1}{2}}\,I_1+
\dfrac{1}{\sin^2\frac{\te_3-\te_2}{2}}\,J_2=0
\end{equation}
We also introduce the integral
\begin{equation}\label{5.10}
I=\int_{0}^{2\pi}(h_T-h_C)h_T\,d\te
\end{equation}
which is nothing else than half the derivative of the functional $J$
at $h_T$. Using the notation \eqref{5.8} and formulae \eqref{5.4},
together with the fact that $\int_{0}^{2\pi}(h_T-h_C)\phi\,d\te=0$, we get:
$I=a_1I_1-a_3J_2$. Thanks to \eqref{5.2} and \eqref{5.9}, we can express
$I_1$ and $J_2$ in terms of $I$:
\begin{equation}\label{5.11a}
I=-\dfrac{1}{2\sin^2\frac{\te_2-\te_1}{2}}\,I_1=
\dfrac{1}{2\sin^2\frac{\te_3-\te_2}{2}}\,J_2\,.
\end{equation}
Obviously, by symmetry and using other equivalent expressions of the
support function $h_T$, we can also conclude that
\begin{equation}\label{5.11b}
I=-\dfrac{1}{2\sin^2\frac{\te_3-\te_2}{2}}\,J_1=
\dfrac{1}{2\sin^2\frac{\te_1-\te_3}{2}}\,K_2=
-\dfrac{1}{2\sin^2\frac{\te_1-\te_3}{2}}\,K_1=
\dfrac{1}{2\sin^2\frac{\te_2-\te_1}{2}}\,I_2\,.
\end{equation}
Note that we can easily express any of the integrals
$\int_{\te_j}^{\te_{j+1}}(h_T-h_C)\sin\te\,d\te$ or
$\int_{\te_j}^{\te_{j+1}}(h_T-h_C)\cos\te\,d\te$ in terms of the
six integrals defined in \eqref{5.8} and therefore entirely in terms of $I$.

Now summing the three equations in \eqref{5.6} and taking
into account that $\frac{\partial a_1}{\partial \te_1}+
\frac{\partial a_1}{\partial \te_2}+
\frac{\partial a_1}{\partial \te_3}=0$, and the analogous relation for $a_3$, yields
$$a_3\int_{\te_2}^{\te_3}(h_T-h_C)\cos(\te-\te_3)-a_1
\int_{\te_1}^{\te_2}(h_T-h_C)\cos(\te-\te_1)=0\,.$$
We can use the previous expressions to write this last inequality in terms of
the integral $I$, so that
\begin{equation}\label{5.12}
\cos\left(\frac{\te_3-\te_2}{2}\right)(\sin(\te_2-\te_1)-\sin(\te_1-\te_3))\,I
=0\,.
\end{equation}
By symmetry, we get the similar relations obtained by permutation.
Since the cosine is positive (the difference between two angles is less than
$\pi$), we deduce from relation \eqref{5.12} and its analogues that
\begin{enumerate}
 \item either $I=0$
 \item or $\te_3-\te_2=\te_2-\te_1=2\pi+\te_1-\te_3$, that is, $T$ is an
equilateral triangle.
\end{enumerate}
Now, in the case of an equilateral triangle, it is also possible to
simplify the integral $I$. The support function
$h_T$ of the equilateral triangle $\te_1,\te_2=\te_1+2\pi/3,\te_3=\te_1+4\pi/3$
is also given by:
\begin{equation}\label{5.13b}
 h_T(\te)=\left\lbrace
\begin{array}{lc}
\dfrac{2\pi}{3\sqrt{3}}\,\cos(\te-\te_1-\pi/3)& \te_1\leq\te\leq \te_2\\
\dfrac{2\pi}{3\sqrt{3}}\,\cos(\te-\te_1-\pi)& \te_2\leq\te\leq \te_3\\
\dfrac{2\pi}{3\sqrt{3}}\,\cos(\te-\te_1-5\pi/3)& \te_3\leq\te\leq \te_1+2\pi\,.\\
\end{array}
\right.
\end{equation}
Then we have:
$$\begin{array}{l}
I=\dfrac{2\pi}{3\sqrt{3}}\left(\int_{\te_1}^{\te_2}(h_T-h_C)
\cos(\te-\te_1-\pi/3)+\int_{\te_2}^{\te_3}(h_T-h_C)
\cos(\te-\te_1-\pi)\right. \\
\qquad  \ \left. +\int_{\te_3}^{\te_1+2\pi}(h_T-h_C)
\cos(\te-\te_1-5\pi/3)
\right)\,.
  \end{array}$$
Using the notation introduced in \eqref{5.8},
a straightforward computation produces
$$I=\dfrac{2\pi}{9}\left(I_1-I_2 +J_1-J_2+K_1-K_2
\right)\,.
$$
Now, replacing each $I_1,I_2,\ldots$ on the right side by its expression in
terms of $I$ obtained in \eqref{5.11a}, Eq. \eqref{5.11b} yields $I=-2\pi I$.
Thus, we also get $I=0$ in this case.

\medskip\noindent
To conclude the proof, it remains to show that it is impossible
that $I=0$
at a (local) maximum.
Thus, let us assume that $I$, as defined in \eqref{5.10}, is equal to 0. We
consider the family of convex sets $K_t=(1-t)T+t\Sigma_\alpha$ where
$\Sigma_\alpha$ is a segment. The derivative of $t\mapsto J(K_t,C)$
at $t=0$ is $2\int_{0}^{2\pi}(h_T-h_C)(h_{\Sigma_\alpha}-h_T)\,d\te$.
Since $I=0$, this derivative is actually
$$g(\alpha):=\pi\int_{0}^{2\pi}(h_T-h_C)(\te)|\sin(\te-\alpha)|\,d\te\,.$$
We can also write $g(\alpha)$ as
$$g(\alpha):=\pi\int_{0}^{\pi}(h_T-h_C)(\te+\alpha)\sin(\te)\,d\te\,.$$
Now this function of $\alpha$ is $\pi$-periodic, continuous and its
integral over $(0,2\pi)$ is
$$\pi\int_{0}^{2\pi}\int_{0}^{\pi}(h_T-h_C)(\te+\alpha)\sin(\te)\,
d\te d\alpha=0\,.$$
Therefore, either $g(\alpha)$ takes positive and negative values, in which
case
$T$ cannot be a local maximizer, or else $g(\alpha)$ is identically 0.
In the latter case, we come back to the optimality condition (among
all convex sets) given in Theorem \ref{theooptimcond}.
There exist $\xi_0\in H^1(\T)$, nonpositive, vanishing on the support of $T$,
and $\mu\in\R$ such that,
for any $v\in H^1(\T)$, the derivative of the functional is given by
\begin{equation}\label{5.13}
<dJ(T),v>=\int_{0}^{2\pi}(h_T-h_C)v(\te)\,d\te=<\xi_0+{\xi_0}'',v>+
\mu \int_0^{2\pi} v\,d\te \,.
\end{equation}
Applying \eqref{5.13} to $v=h_{\Sigma_\alpha}-h_T$, since the left
side is zero and $\int_{0}^{2\pi} h_{\Sigma_\alpha}=\int_{0}^{2\pi} h_T=2\pi$,
it follows that
for any $\alpha\in (0,\pi)$, $\xi_0(\alpha)+\xi_0(\alpha+\pi)=0$.
Since $\xi_0\leq 0$, this implies that $\xi_0=0$. Now applying
\eqref{5.13} once again to $v=h_{\Sigma_\alpha}$, we get
$$0=\int_{0}^{2\pi}(h_T-h_C)h_{\Sigma_\alpha}\,d\te=2\pi\mu.$$
Thus $\mu=0$ and the derivative of the $L^2$ distance at $T$ is identically
zero. This implies that $C=T$, and is thus actually the global minimizer.

\medskip
The final claim of the theorem follows easily from the expansion
$$\int_0^{2\pi}(h_{\Sigma_\alpha}-h_C)^2\,d\te=\frac{\pi^3}{4}+
\int_0^{2\pi} h_C^2\,d\te-\pi\int_0^{2\pi}h_C|\sin(\te-\alpha)|\,d\te$$
and the equality
$$\int_0^{2\pi}h_C|\sin(\te-\alpha)|\,d\te=2\int_0^{\pi}h_C(\te+\alpha)
\sin\te\,d\te\,.$$
\end{proof}
\begin{remark}\rm
The farthest segment according to the $L^2$ distance is not necessarily unique.
Apart from the trivial example of a disc, for a body of
constant width, {\it every} segment in $\A$ is equally distant. This can easily be seen
using the Fourier series expansion of the support function of
a body of constant width $C$, which is known
to contain only odd terms other than the constant:
$$h_C(\te)=1+\sum_{k=-\infty,\,
k\not= -1,1}^{+\infty} c_{2k+1}e^{(2k+1)i\te},$$
while the Fourier series expansion of the support function $h_{\alpha}$ of a
segment $\Sigma_{\alpha}$ contains only even terms. This is due to the relation
${h''}_{\alpha}+h_{\alpha} = \frac{\pi}{2}(\delta_{\alpha}+
\delta_{\pi+\alpha})$, which when applied to $e^{-in\te}$ yields the following equality
for the $n$-th Fourier coefficient $\gamma_n$ of $h_{\alpha}$:
$$(1-n^2)\gamma_n=\frac{\pi}{2}\,e^{-in\alpha}(1+e^{-in\pi})\,.$$
The $L^2$ distance between $C$ and $\Sigma_{\alpha}$ is
$$d_2(C,\Sigma_{\alpha})=\int_0^{2\pi} h_{\alpha}^2\,d\te-2
\int_0^{2\pi} h_Ch_{\alpha}\,d\te + \int_0^{2\pi} h_C^2\,d\te\,.$$
Now, using the Parseval relation and the orthogonality properties
of the Fourier coefficients
of the two support functions, we see that the integral
$\int_0^{2\pi} h_Ch_{\alpha}\,d\te$ is always equal to $2\pi$, and therefore
the $L^2$ distance between $C$ and a segment does not depend on the segment
within the class $\A$.
\end{remark}
\begin{figure}[h!]
\begin{center}
\scalebox{.7}[0.6]{\includegraphics{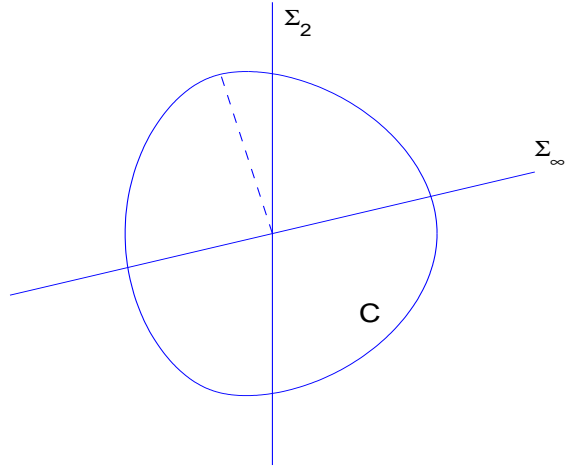}}
\caption{The farthest segments $\Sigma_2$  and $\Sigma_\infty$
do not generally coincide.\label{figure3}}
\end{center}
\end{figure}
\begin{remark}\rm
The farthest segment for the $L^2$ distance and for the Hausdorff distance
do not generally coincide. The Figure \ref{figure3} shows the farthest
segment $\Sigma_2$ (for the $L^2$ distance) and $\Sigma_\infty$ (for the
Hausdorff distance) of the convex set $C$ whose support function is
$h_C(\te)=1-0.1\cos(2\te)+0.05\cos(3\te)$.
\end{remark}


\begin{thebibliography}{99}

\bibitem{EFH}
{\sc P. Exner, M. Fraas,  E.~M. Harrell II}, {\it
On the critical exponent in an isoperimetric inequality for chords},
Physics Letters, A 368 (2007), 1-6.
\bibitem{EHL}
{\sc P. Exner, E.~M. Harrell II, M. Loss},
{\it Inequalities for means of chords, with application to isoperimetric problems}, Letters in Math. Phys., 75 (2006), 225-233.   Addendum, Ibid., 77(2006)219.
\bibitem{Gro}
{\sc H. Groemer}, {\it Geometric applications of Fourier series and
spherical harmonics}, Encycl. Math. and Appl. {\bf 61},
Cambridge: Cambridge Univ. Press, 1996.


\bibitem{Gru} {\sc P.M. Gruber}, {\it The space of convex bodies},
Handbook of convex geometry, P.M. Gruber and J.M. Wills eds, Elsevier
1993, pp. 301-318.

\bibitem{HH2}
{\sc E.~M. Harrell II, A. Henrot},
{\it On the maximum of a class of functionals on convex regions, and the means
of chords weighted by curvature}, in prep.


\bibitem{HP} {\sc A. Henrot, M. Pierre}, Variation et
optimisation de formes,
Math\'ematiques et Applications {\bf 48}, Springer, 2005.
\bibitem{LRP} {\sc T. Lachand-Robert, M.A. Peletier}, {\it Newton's problem
of the body of minimal resistance in the class of convex developable
functions},  Math. Nachr., {\bf 226}  (2001), 153--176.
\bibitem{LaNo} {\sc J. Lamboley, A. Novruzi}, {\it Polygons as optimal shapes
with convexity constraint}, to appear.
\bibitem{McC-Vi} {\sc D.E. McClure, R.A. Vitale}, {\it Polygonal approximation
of plane convex bodies},J. Math. Anal. Appl. {\bf 51} (1975), 326-358.
\bibitem{McM} {\sc P. McMullen}, {\it The Hausdorff distance between compact
convex sets}, Mathematika, {\bf 31} (1984), 76-82.
\bibitem{MaZo} {\sc H. Maurer, J. Zowe},  {\it First and second order
necessary and sufficient optimality conditions for infinite-dimensional
programming problems},  Math. Programming, {\bf 16}  (1979), no. 1, 98-110.

\bibitem{PS}
{\sc G. P\'olya, G. Szeg\H{o}}, Isoperimetric inequalities in mathematical physics, Annals of Mathematics Studies {\bf AM-27}. Princeton:  Princeton University Press, 1951.

\bibitem{Roc} {\sc R.~T. Rockafellar}, Convex Analysis, Princeton University Press, 1970.
\bibitem{Sch1} {\sc R. Schneider}, Convex bodies: the Brunn-Minkowski Theory,
Encyclopedia of Mathematics and its Applications, Cambridge University Press
1993.
\bibitem{Web} {\sc R. Webster},  Convexity,  Oxford University Press, 1994.
\end{thebibliography}
\end{document}